\title{Generic forms of low Chow rank}
\author{Douglas A. Torrance}
\email{dtorrance@piedmont.edu}
\address{Department of Mathematics and Physics \\ Piedmont College \\ PO Box 10 \\ 1021 Central Ave \\ Demorest, GA 30535}
\subjclass[2010]{14N05}
\newtheorem{theorem}{Theorem}[section]
\newtheorem{conjecture}[theorem]{Conjecture}
\newtheorem{lemma}[theorem]{Lemma}
\newtheorem{corollary}[theorem]{Corollary}
\newtheorem{proposition}[theorem]{Proposition}
\theoremstyle{definition}
\newtheorem{definition}[theorem]{Definition}
    \newtheoremstyle{TheoremNum}
        {\topsep}{\topsep}              
        {\itshape}                      
        {}                              
        {\bfseries}                     
        {.}                             
        {5pt plus 1pt minus 1pt}        
        {\thmname{#1}\thmnote{ \bfseries #3}}
    \theoremstyle{TheoremNum}
    \newtheorem{thmn}{Theorem}
\newcommand{\kk}{\Bbbk}
\renewcommand{\AA}{\mathbb A}
\newcommand{\NN}{\mathbb N}
\newcommand{\PP}{\mathbb P}
\newcommand{\define}[1]{\textit{#1}}
\newcommand{\affinecone}[1]{\widehat{#1}}
\newcommand{\tuple}[1]{\textbf{#1}}
\newcommand{\statement}[1]{\mathfrak{#1}}
\renewcommand{\vec}{\textbf}
\DeclareMathOperator{\chowrank}{rk_{Ch}}
\DeclareMathOperator{\waringrank}{rk}
\DeclareMathOperator{\Ch}{Ch}
\DeclareMathOperator{\Split}{Split}
\DeclareMathOperator{\expdim}{expdim}
\DeclareMathOperator{\Span}{span}
\begin{document}

\maketitle

\begin{abstract}
The least number of products of linear forms that may be added together to obtain a given form is the Chow rank of this form.  The Chow rank of a generic form corresponds to the smallest $s$ for which the $s$th secant variety of the Chow variety fills the ambient space.  We show that, except for certain known exceptions, this secant variety has the expected dimension for low values of $s$.
\end{abstract}

\section{Introduction}
Consider a homogeneous polynomial $f$ of degree $d$ in $n+1$ variables with coefficients in an algebraically closed field of characteristic zero.  As in \cite{Landsberg2}, we define the \define{Chow rank} of $f$, or $\chowrank f$, to be the minimum $s$ such that
\begin{equation*}
f = \ell_{1,1}\cdots\ell_{1,d}+\cdots+\ell_{s,1}\cdots\ell_{s,d}
\end{equation*}
where the $\ell_{i,j}$ are linear forms.

The Chow rank of a form is useful for determining the computational complexity of its evaluation.  Indeed, if we have the above decomposition, we can evaluate each $\ell_{i,j}$ using at most $sd(n+1)$ additions, and then evaluate $f$ by $sd$ multiplications followed by $s$ additions.  We say that $f$ is computable by a \textit{homogeneous circuit} of size $s+sd(n+2)$.  See \cite[\S 8]{Landsberg2} for more on this topic.

Certainly, the Chow rank of a monomial is 1, and as $f$ is the sum of at most $\binom{n+d}{d}$ monomials, we have $\chowrank f\leq\binom{n+d}{d}$.

Furthermore, recall that the \define{Waring rank} of $f$, or $\waringrank f$, is the minimum $s$ such that
\begin{equation*}
f=\ell_1^d+\cdots+\ell_s^d
\end{equation*}
where the $\ell_i$ are linear forms.  We see then that $\chowrank f\leq\waringrank f$.

The Waring rank of a form has been well-studied.  If $f$ is generic, then $\waringrank f$ is equal to the smallest $s$ such that $\sigma_s(\nu_d(\PP^n))$, the $s$th secant variety of the Veronese variety, fills the ambient space.  In \cite{AlexanderHirschowitz}, Alexander and Hirschowitz proved a century-old conjecture that these secant varieties have the expected dimension except for a small number of exceptional cases.

Similarly, the Chow rank of a generic form is equal to the smallest $s$ such that $\sigma_s(\Split_d(\PP^n))$, the $s$th secant variety of the Chow variety (the variety of all forms which can be completely reduced into a product of linear forms), fills the ambient space.  We define the \define{expected dimension} of $\sigma_s(\Split_d(\PP^n))$ to be
\begin{equation*}
\expdim\sigma_s(\Split_d(\PP^n))=\min\left\{s(dn+1),\binom{n+d}{d}\right\}-1.
\end{equation*}

Based on a na\"ive dimension count, we would expect that
\begin{equation*}
\dim\sigma_s(\Split_d(\PP^n))=\expdim\sigma_s(\Split_d(\PP^n)).
\end{equation*}

If this equation holds, then $\sigma_s(\Split_d(\PP^n))$ is \define{nondefective}.  Otherwise, it is \define{defective}.

Secant varieties of Chow varieties are related to unions of linear star configurations \cite{Shin2, Shin} and complete intersections on hypersurfaces \cite{carlini2008complete,carlini2010complete}.  There has also been some work on the more general problem of secant varieties to varieties of forms which can be reduced into a product of lower degree (not necessarily linear) forms \cite{catalisano2015secant,catalisano2014secant}.

The following conjecture is made in \cite{ArrondoBernardi}.

\begin{conjecture}\label{conjecture}
The secant variety $\sigma_s(\Split_d(\PP^n))$ is nondefective unless $d=2$ and $2\leq s\leq\frac{n}{2}$.
\end{conjecture}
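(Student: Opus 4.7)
The natural first move is Terracini's lemma: for general points $p_1,\ldots,p_s$ of $\Split_d(\PP^n)$,
\[
\dim\sigma_s(\Split_d(\PP^n)) = \dim\Span\bigl(\affinecone{T_{p_1}},\ldots,\affinecone{T_{p_s}}\bigr) - 1.
\]
Writing $p_k=[\ell_{k,1}\cdots\ell_{k,d}]$, differentiation of the product map $(\PP^n)^d\dashrightarrow\Split_d(\PP^n)$ shows
\[
\affinecone{T_{p_k}\Split_d(\PP^n)} = \sum_{i=1}^d\Bigl(\prod_{j\neq i}\ell_{k,j}\Bigr)\cdot V,
\]
where $V$ is the $(n+1)$-dimensional space of linear forms. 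Nondefectivity is then equivalent to showing that this sum of $s$ subspaces has dimension $\min\{s(dn+1),\binom{n+d}{d}\}$ for generic $\ell_{k,i}$.

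The plan is to proceed by induction on $(n,d)$ using a Horace-type degeneration. Fix a hyperplane $H\cong\PP^{n-1}$ and specialize a carefully chosen subset of the linear factors $\ell_{k,i}$ to the defining equation of $H$. The short exact sequence
\[
0\to W_{\mathrm{res}}\to W\to W_{\mathrm{tr}}\to 0,
\]
where $W=\sum_k\affinecone{T_{p_k}}$, $W_{\mathrm{tr}}$ is the trace of $W$ on $H$, and $W_{\mathrm{res}}$ is its residue (the intersection with the ideal of $H$), identifies $W_{\mathrm{tr}}$ with a span of tangent spaces to a Chow variety on $\PP^{n-1}$ and $W_{\mathrm{res}}$ with (a twist of) a span of tangent spaces to a Chow variety of smaller degree on $\PP^n$. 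Both are instances covered by the inductive hypothesis, and adding the dimensions yields the induction step; semicontinuity then lifts the bound back to generic $p_k$.

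The base cases are standard: $s=1$ reduces to the generic-point Jacobian computation, $d=1$ gives $\Split_1(\PP^n)=\PP^n$ trivially, and $n=1$ is the classical binary-forms situation. The excluded range $d=2$, $2\leq s\leq n/2$ reflects the fact that $\Split_2(\PP^n)$ is the rank-$\leq 2$ locus of symmetric matrices in $\PP(\mathrm{Sym}^2 V)$, whose $s$th secant is the rank-$\leq 2s$ locus --- defective in the stated range for well-known reasons from matrix rank theory, and genuinely unavoidable for any tangent-space approach.

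The hard step is the induction at the critical value of $s$ where $s(dn+1)$ first meets $\binom{n+d}{d}$: the naive Horace specialization typically fails here, forcing use of the \emph{differential Horace method} of Alexander and Hirschowitz, specializing only some of the factors to $H$ and imposing first-order conditions on the remainder. For the low-$s$ regime advertised in the abstract, however, one is comfortably far below this boundary; here I expect either plain Horace induction to go through, or a direct argument by choosing $p_k=[x_{i_{k,1}}\cdots x_{i_{k,d}}]$ to be coordinate monomials whose induced monomial supports in $\affinecone{T_{p_k}}$ are mutually disjoint, reducing nondefectivity to a combinatorial selection problem on $d$-subsets of $\{0,1,\ldots,n\}$.
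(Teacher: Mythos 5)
The statement you are addressing is a conjecture: the paper does not prove it, and it remains open. The paper only establishes it in partial ranges (quadratics, $s\le 35$, all $n\ge n_0$ once a subabundant base case is known, and the ternary/cubic ranges of Theorem \ref{AboResult}). Your proposal is accordingly not a proof but a program, and you say as much: the induction step at the critical value of $s$ is flagged as ``the hard step,'' the low-$s$ regime is handled by ``I expect,'' and the combinatorial monomial alternative is left as an unsolved selection problem. Those are precisely the points where all the difficulty lives, so nothing is actually established beyond the correct (and standard) Terracini reduction and the correct identification of the $d=2$ exceptions with the rank stratification of symmetric matrices.

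Concretely, the first place your induction breaks as stated is that the trace/residue decomposition does not reproduce the inductive hypothesis. After specializing some factors to the equation of $H$, the two pieces are \emph{not} simply spans of tangent spaces to Chow varieties of lower dimension or degree: they contain extra summands of the form $\Span\{\pi_j(\vec f)\}$ (a single completely reducible form, not a full tangent space) and of the form $\pi_1(\vec f)R_1$ (a product times all linear forms, again not a tangent space), and the degree drops by two in one component, not one. This is exactly why the paper's Theorem \ref{SplittingInductionTheorem} works with the enlarged family of statements $\statement A(n,d,s,t,u,v)$ carrying two auxiliary parameters $u,v$ beyond the naive ones, decomposes the Terracini space into \emph{three} summands (degrees $d$, $d-1$, and $d-2$), and needs Lemma \ref{F true} to manufacture the required auxiliary true statements. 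Even with that machinery the paper cannot reach the superabundant side of the boundary, must verify base cases for each $s$ by explicit Macaulay2 computation, and stops at $s\le 35$. A complete proof of the conjecture would require resolving exactly the steps you defer; until then your argument proves nothing beyond what is already in the literature.
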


Note that if this is true, then a generic form of degree $d\neq 2$ in $n+1$ variables has Chow rank $\left\lceil\frac{1}{dn+1}\binom{n+d}{d}\right\rceil$.

We summarize the previous progress towards a proof of Conjecture \ref{conjecture}.  It is trivial for linear and binary forms and straightforward for quadratics (see Theorem \ref{DEqualsTwo} below).  In \cite{ArrondoBernardi}, Arrondo and Bernardi showed that is is true if $n\geq 3(s-1)$.  In \cite{catalisano2015secant}, Catalisano et al. improved this to $n\geq 2s-1$ and also proved the conjecture for $s\geq\binom{n+d-1}{n}$.  Shin \cite{Shin} and Abo \cite{Abo} proved it for ternary forms.  Abo also provided partial results for cubics and quaternary forms.  As we will use these results in section 4, we state them here.

\begin{theorem}\label{AboResult}\cite{Abo}
Consider the following functions.
\begin{align*}
s_1(d) &=
\begin{cases}
\frac{1}{18}d^2+\frac{1}{6}d+1&\text{ if }d\equiv 0\pmod{6}\\
\frac{1}{18}d^2+\frac{2}{9}d-\frac{5}{18}&\text{ if }d\equiv 1\pmod{6}\\
\frac{1}{18}d^2+\frac{5}{18}d+\frac{2}{9}&\text{ if }d\equiv 2,5\pmod{6}\\
\frac{1}{18}d^2+\frac{1}{6}d&\text{ if }d\equiv 3\pmod{6}\\
\frac{1}{18}d^2+\frac{2}{9}d+\frac{2}{9}&\text{ if }d\equiv 4\pmod{6}\\
\end{cases}\\
s_2(d) &=
\begin{cases}
\frac{1}{18}d^2+\frac{1}{3}d+1&\text{ if }d\equiv 0\pmod{6}\\
\frac{1}{18}d^2+\frac{7}{18}d+\frac{14}{9}&\text{ if }d\equiv 1\pmod{6}\\
\frac{1}{18}d^2+\frac{4}{9}d+\frac{8}{9}&\text{ if }d\equiv 2\pmod{6}\\
\frac{1}{18}d^2+\frac{1}{3}d+\frac{1}{2}&\text{ if }d\equiv 3\pmod{6}\\
\frac{1}{18}d^2+\frac{7}{18}d+\frac{5}{9}&\text{ if }d\equiv 4\pmod{6}\\
\frac{1}{18}d^2+\frac{4}{9}d+\frac{7}{18}&\text{ if }d\equiv 5\pmod{6}\\
\end{cases}
\end{align*}
If $n=3$ and $s\leq s_1(d)$ or $s\geq s_2(d)$, or $d=3$ and $s\leq s_1(n)$ or $s\geq s_2(n)$, then $\sigma_s(\Split_d(\PP^n))$ is nondefective.
\end{theorem}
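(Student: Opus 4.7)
The plan is to use Terracini's Lemma to translate the question into linear algebra on tangent spaces of $\Split_d(\PP^n)$, and then to control those tangent spaces by a specialization/degeneration argument in the spirit of the Horace method.

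First, recall that at a smooth point $F = \ell_1 \cdots \ell_d \in \Split_d(\PP^n)$ the affine tangent cone is the linear span of the forms $\ell_1 \cdots \widehat{\ell_i} \cdots \ell_d \cdot m$ as $i$ ranges over $1,\dots,d$ and $m$ ranges over all linear forms. By Terracini's Lemma, $\dim \sigma_s(\Split_d(\PP^n))$ equals $\dim(T_{F_1} + \cdots + T_{F_s}) - 1$ for $s$ generic decomposable $F_k$. So I would reformulate the statement as: for $n=3$ or $d=3$, and $s$ in either of the two ranges, the natural multiplication map
\begin{equation*}
\bigoplus_{k=1}^{s}\bigoplus_{i=1}^{d}\kk\{\ell_{k,1}\cdots\widehat{\ell_{k,i}}\cdots\ell_{k,d}\}\otimes R_1 \longrightarrow R_d
\end{equation*}
from the obvious space (where $R_1$ is the space of linear forms and $R_d$ the space of degree-$d$ forms) has the expected rank.

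Next I would run a double induction: on $d$ for the $n=3$ slice, and on $n$ for the $d=3$ slice. The inductive step uses a specialization: fix a hyperplane $H \subset \PP^n$ and force a carefully chosen number of the $\ell_{k,j}$ to lie in $H$. Restricting the tangent-space span to $H$ gives a Terracini-type problem for $\Split_{d'}(\PP^{n-1})$ (with smaller $d$ or $n$), while the residual, obtained by dividing by the equation of $H$, gives another Terracini problem on $\PP^n$ with smaller $d$. A Castelnuovo/Horace exact sequence then lets one conclude the desired rank at level $(n,d,s)$ provided both the trace and residue problems are nondefective. The small-$s$ regime $s\le s_1(d)$ and the large-$s$ regime $s\ge s_2(d)$ correspond to the two natural sides of the expected dimension, and require different specialization patterns (in one case one must show injectivity, in the other surjectivity of the corresponding multiplication map).

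Finally, the base cases of the induction (small $d$ for the $n=3$ result, small $n$ for the $d=3$ result) need to be verified directly; for these I would either invoke the ternary-form results of Shin and Abo already cited, or simply perform a symbolic-rank computation on a random specialization of the tangent-space matrix over a large finite field, which certifies nondefectivity by semicontinuity.

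The main obstacle, and the reason the statements of $s_1$ and $s_2$ split into six residue classes modulo $6$, is choosing the right number of factors to specialize onto $H$ at each inductive step. One must simultaneously balance (i) how many distinct $F_k$ have their hyperplane-contained factors, (ii) which factor $\widehat{\ell_{k,i}}$ is being removed in each Terracini summand, and (iii) the divisibility conditions on the expected dimension of both the trace and residue systems. Making all three fit forces the modular refinement in the bounds, and verifying that the inductive hypothesis can be applied on both sides without falling back into the defective range $d=2$ is the delicate point in the argument.
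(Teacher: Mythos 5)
This theorem is not proved in the paper at all: it is quoted verbatim from the cited work of Abo, and the paper's own inductive machinery (Theorem \ref{SplittingInductionTheorem}, adapted from Abo--Ottaviani--Peterson) is a close relative of the method you describe. So your overall strategy --- Terracini's lemma to reduce to the rank of a multiplication map, then a Horace-style specialization of some factors into a hyperplane, splitting the problem into a trace system on $\PP^{n-1}$ and a residual system of lower degree, with injectivity in the subabundant range and surjectivity in the superabundant range --- is indeed the right framework and matches the approach used in the source.

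However, as a proof your proposal has a genuine gap: essentially all of the content of this theorem lives in the precise formulas for $s_1(d)$ and $s_2(d)$, and your outline stops exactly where that content begins. You acknowledge that the six residue classes modulo $6$ arise from balancing the specialization counts against the divisibility of the expected dimensions, but you never exhibit a specialization scheme, never write down the recursion it induces on $(n,d,s)$, and never verify that the recursion closes up on the claimed bounds; without that bookkeeping the theorem could just as well hold with different constants, or fail. There is also a circularity in your treatment of the base cases: you propose to ``invoke the ternary-form results of Shin and Abo already cited,'' but the statement you are trying to prove \emph{is} Abo's result, so the base cases must instead be settled independently (by explicit rank computations over a finite field and semicontinuity, as you alternatively suggest --- but then those finitely many cases need to be identified and listed, which again requires the quantitative analysis you have deferred). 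To turn this into a proof you would need to specify the exact number of factors placed in the hyperplane at each step for each residue class, check subabundance/superabundance of both the trace and residue systems at every stage, and enumerate the base cases explicitly.
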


In section 2 of this paper, we introduce the basic definitions.  In section 3, we establish a method of induction which will be used in section 4 to prove the following results.

\begin{theorem}\label{induction on n}
If, for some $n_0\in\NN$, $s(dn_0+1)\leq\binom{n_0+d}{d}$ and $\sigma_s(\Split_d(\PP^{n_0}))$ is nondefective, then $\sigma_s(\Split_d(\PP^n))$ is nondefective for all $n\geq n_0$.
\end{theorem}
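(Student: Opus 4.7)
The plan is to proceed by induction on $n$, with the base case $n=n_0$ handed to us by hypothesis. For the inductive step, assume $\sigma_s(\Split_d(\PP^n))$ is nondefective for some $n\geq n_0$, and deduce the same for $n+1$. First I would verify that subabundance continues to hold: a direct computation shows that the ratio $\binom{m+d}{d}/(dm+1)$ is a strictly increasing function of $m$ for $d\geq 2$, so the inequality $s(dm+1)\leq\binom{m+d}{d}$ is preserved as $m$ grows. Consequently the expected projective dimension at level $n+1$ is $s(d(n+1)+1)-1$.

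By Terracini's Lemma, nondefectivity at $n+1$ is equivalent to the sum of $s$ affine tangent spaces at generic points of $\Split_d(\PP^{n+1})$ having dimension $s(dn+d+1)$ inside $\kk[x_0,\ldots,x_{n+1}]_d$. Because this dimension is lower semicontinuous under specialization, it suffices to exhibit one configuration achieving it. I would specialize the $s$ points into $\Split_d(\PP^n)\subset\Split_d(\PP^{n+1})$ via the embedding $\PP^n\hookrightarrow\PP^{n+1}$ given by $x_{n+1}=0$: write $f_i=\ell_{i,1}\cdots\ell_{i,d}$ with each $\ell_{i,j}\in\kk[x_0,\ldots,x_n]_1$.

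The bigrading $\kk[x_0,\ldots,x_{n+1}]_d=\kk[x_0,\ldots,x_n]_d\oplus x_{n+1}\cdot\kk[x_0,\ldots,x_{n+1}]_{d-1}$ splits each tangent space cleanly:
\[
T_{f_i}\Split_d(\PP^{n+1})=T_{f_i}\Split_d(\PP^n)\oplus x_{n+1}\cdot W_i,
\]
where $W_i=\Span\{f_i/\ell_{i,j}:j=1,\ldots,d\}\subset\kk[x_0,\ldots,x_n]_{d-1}$ is $d$-dimensional for generic $\ell_{i,j}$. Since the two summands live in distinct graded pieces of the bigrading, summing over $i$ gives
\[
\sum_i T_{f_i}\Split_d(\PP^{n+1})=\Bigl(\sum_i T_{f_i}\Split_d(\PP^n)\Bigr)\oplus x_{n+1}\cdot\Bigl(\sum_i W_i\Bigr),
\]
and the inductive hypothesis contributes $s(dn+1)$ dimensions to the first summand.

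The remaining step is to show $\dim\sum_i W_i=sd$, which would bring the total to $s(dn+d+1)$. I expect this to look like the main obstacle on first reading, but it should follow from the inductive hypothesis itself: if $\sum_i u_i=0$ with $u_i\in W_i$, then multiplying by $x_0$ (a nonzero divisor in $\kk[x_0,\ldots,x_n]$) yields $\sum_i x_0 u_i=0$, and since $x_0 u_i\in W_i\cdot\kk[x_0,\ldots,x_n]_1=T_{f_i}\Split_d(\PP^n)$, the inductive hypothesis, which in the subabundant regime forces the tangent spaces to be in direct sum, gives $x_0 u_i=0$, hence $u_i=0$. So the $W_i$ are in direct sum, completing the induction. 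The real burden of the argument is therefore concentrated in selecting the right specialization and checking that the bigrading decomposition is clean; the linear-independence claim reduces to the IH via this nonzero-divisor trick.
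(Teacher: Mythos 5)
Your argument is correct and is essentially the paper's own proof with its machinery unpacked: the paper's Theorem 3.1 (applied with $s'=0$, $s''=s$, $t=u=v=0$) performs exactly your specialization into a hyperplane and the splitting by degree in the new variable, while its Lemma 4.1 establishes $\dim\sum_i W_i=sd$ using the same multiply-by-a-linear-form trick you use for the cross-index independence. The only point you assert without proof --- that each $W_i$ is $d$-dimensional --- is handled in that lemma by a short divisibility argument, but it is an easy standard fact about the $d$ cofactors of a product of pairwise non-proportional linear forms, so nothing essential is missing.
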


\begin{theorem}\label{SUpperBound}
If $s\leq 35$, then $\sigma_s(\Split_d(\PP^n))$ is nondefective for all $n,d\in\NN$ unless $d=2$ and $2\leq s\leq\frac{n}{2}$.
\end{theorem}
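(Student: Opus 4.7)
The plan is to fix $s$ with $1 \leq s \leq 35$ and establish nondefectiveness of $\sigma_s(\Split_d(\PP^n))$ for every pair $(n,d)$ with $d \geq 3$; the exceptional $d=2$ case is handled by Theorem \ref{DEqualsTwo}. I would begin by invoking the Catalisano et al.\ bound $n \geq 2s-1$, which immediately settles every $n \geq 69$, so only finitely many triples $(s,d,n)$ remain to consider.

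For each remaining pair $(s,d)$, the strategy is to apply Theorem \ref{induction on n}. Let $n_0 = n_0(s,d)$ be the smallest integer for which the subabundance inequality $s(dn_0+1)\leq\binom{n_0+d}{d}$ holds; because the right side is a polynomial in $n_0$ of degree $d\geq 3$ while the left is linear, $n_0$ is small in practice, and indeed for $d$ large relative to $s$ one may take $n_0 = 3$. Once $\sigma_s(\Split_d(\PP^{n_0}))$ is known to be nondefective, Theorem \ref{induction on n} propagates the conclusion to every $n \geq n_0$.

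I would verify the base cases by patching together prior results: Abo's Theorem \ref{AboResult} covers $n_0 = 3$ whenever $s \leq s_1(d)$, which handles all $d$ beyond roughly $d = 25$; the $d=3$ branch of the same theorem takes care of the $d=3$ slice of the finite rectangle that remains; and the Catalisano et al.\ condition $s \geq \binom{n+d-1}{n}$ covers the ``large-$s$'' cases where the ambient space is small. For the finite list of triples not covered by any of these, I would compute the dimension of $\sigma_s(\Split_d(\PP^{n_0}))$ explicitly via Terracini's lemma: evaluating the Jacobian of the parametrization at randomly chosen points, over a sufficiently large finite field to avoid coincidental rank drops, suffices to certify the dimension.

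The main obstacle is the bookkeeping---verifying that the four tools (Abo's theorem, the two Catalisano et al.\ bounds, and Theorem \ref{induction on n}) together exhaust every triple $(s,d,n)$ with $s \leq 35$ and $d \geq 3$, and executing any residual Terracini computations at the base cases. Once the enumeration is made explicit and the finitely many tabulated base cases are checked, the rest of the argument is routine.
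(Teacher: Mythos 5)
Your proposal matches the paper's argument in all essentials: reduce to a finite list of pairs $(n,d)$ for each fixed $s\leq 35$ using Theorem \ref{induction on n} together with Theorem \ref{AboResult} and the Catalisano et al.\ results (this is exactly the content of Proposition \ref{SmallSAlgorithm}), then certify the remaining base cases by computing the rank of the Terracini matrix by machine, as the paper does in Macaulay2. Your extra appeal to the bound $n\geq 2s-1$ is harmless but redundant, since Theorem \ref{induction on n} already propagates nondefectiveness to all large $n$ once the first subabundant case is verified.
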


\begin{corollary}
If $f$ is a generic form of degree $d$ in $n+1$ variables and
\begin{itemize}
\item $n=3$ and $d\leq 22$ or $d=3$ and $n\leq 22$,
\item $n=4$ and $d\leq 11$ or $d=4$ and $n\leq 11$,
\item $n=5$ and $d\leq 8$ or $d=5$ and $n\leq 8$, or
\item $n=d=6$,
\end{itemize}
then $\chowrank f=\left\lceil\frac{1}{dn+1}\binom{n+d}{d}\right\rceil$.
\end{corollary}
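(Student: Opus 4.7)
The plan is to reduce the corollary to a direct instance of Theorem \ref{SUpperBound}. Set $s_0 = \lceil \binom{n+d}{d}/(dn+1)\rceil$. The generic Chow rank equals the smallest $s$ for which $\sigma_s(\Split_d(\PP^n))$ fills the ambient $\PP^{\binom{n+d}{d}-1}$. For every $s < s_0$, one has $\dim \sigma_s \leq s(dn+1) - 1 < \binom{n+d}{d}-1$, so $\sigma_s$ cannot fill the ambient space regardless of defectivity; and if $\sigma_{s_0}$ is nondefective, then $\dim \sigma_{s_0} = \binom{n+d}{d} - 1$, so it does fill. The corollary therefore reduces to showing that $\sigma_{s_0}(\Split_d(\PP^n))$ is nondefective for each listed pair $(n,d)$.

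Next I would verify that $s_0 \leq 35$ in every listed case, so that Theorem \ref{SUpperBound} applies directly. Since both $\binom{n+d}{d}$ and $dn+1$ are symmetric under the swap $(n,d) \leftrightarrow (d,n)$, so is $s_0$, and the two halves of each bullet ("$n=n_0$ and $d\leq d_0$" versus "$d=n_0$ and $n\leq d_0$") produce identical values. A short computation shows that $\binom{n+d}{d}/(dn+1)$ is nondecreasing in $d$ for fixed $n\geq 2$, so the maximum of $s_0$ within each bullet is attained at its boundary. The boundary computations yield $s_0 = 35$ at $(n,d)=(3,22)$, $s_0 = 31$ at $(4,11)$, $s_0 = 32$ at $(5,8)$, and $s_0 = 25$ at $(6,6)$; in all cases $s_0 \leq 35$, so Theorem \ref{SUpperBound} supplies the required nondefectivity.

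The only delicate point, and in my view the main thing to flag rather than a genuine obstacle, is the quadratic exception shared by Theorem \ref{SUpperBound} and Conjecture \ref{conjecture}. Direct linear algebra shows that the formula already fails for generic quadrics in five or more variables (a generic quadric in five variables has Chow rank $3$, not $\lceil 15/9\rceil = 2$, since it has matrix rank $5$ and each product of linear forms is of rank at most $2$). In keeping with the remark following Conjecture \ref{conjecture}, the corollary is therefore understood under the implicit hypothesis $d\neq 2$. Apart from this disclaimer, the proof reduces entirely to the finite verification above; no further ingredient is needed, since Theorem \ref{SUpperBound} absorbs all the combinatorial and geometric content.
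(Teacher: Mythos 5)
Your proposal is correct and follows exactly the route the paper intends: the corollary is the immediate consequence of Theorem \ref{SUpperBound}, obtained by checking that $s_0=\lceil\binom{n+d}{d}/(dn+1)\rceil\leq 35$ at each boundary case (your values $35$, $31$, $32$, $25$ all check out), and your observation that $d=2$ must be implicitly excluded in the bullets containing it is a fair and accurate caveat consistent with the paper's remark after Conjecture \ref{conjecture}.
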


This paper is based on results from the author's Ph.D. thesis \cite{Torrance} while studying at the University of Idaho.  Many thanks go to his advisor, Hirotachi Abo, and committee members Jennifer Johnson-Leung and Alexander Woo.  Thanks also go to Enrico Carlini, Anthony Geramita, Zach Teitler, William J. Turner, Jia Wan, and the anonymous referee for helpful comments.  Finally, thanks go to Logan Mayfield and the Monmouth College Department of Mathematics and Computer Science for the use of the server on which the calculations were run.

\section{Preliminaries}

Let $\kk$ be an algebraically closed field of characteristic zero and $R$ the polynomial ring $\kk[x_0,\ldots,x_n]$ with the usual grading.  Let $R_d$ denote the $d$th graded piece of $R$, i.e., the $\kk$-vector space of all forms in $R$ of degree $d$.  For a fixed $d$, define $N=\binom{n+d}{d}-1$.  Then $\dim_{\kk} R_d=N+1$ and so $\PP R_d=\PP^N$.  For all $f\in R_d$, denote by $[f]$ the equivalence class in $\PP^N$ containing $f$.

We define the \define{linear span} of $s$ varieties $X_1,\ldots,X_s\subset\PP^N$ to be the smallest linear variety containing $X_1,\ldots,X_s$ and denote it as $\langle X_1,\ldots,X_s\rangle$.

Also, for any variety $X\subset\PP^N$, we denote its affine cone in $\AA^{n+1}$ as $\affinecone X$.  Note that if $X$ is a linear subspace of $\PP^N$, then $\affinecone X$ is a $\kk$-vector space.  We denote the tangent space of $X$ at a point $x$ by $T_xX$.

\begin{definition}
For any $n,d\in\NN$, the \define{Chow variety} $\Split_d(\PP^n)$ is the variety in $\PP^N$ consisting of all completely reducible degree $d$ forms in $n+1$ variables, i.e.,
\begin{equation*}
\Split_d(\PP^n)=\{[\ell_1\cdots\ell_d]\in\PP^N:\ell_1,\ldots,\ell_d\in R_1\}.
\end{equation*}
\end{definition}

The Chow variety is also known as the \define{split variety} or \define{variety of completely reducible/decomposable forms} and is also denoted in the literature by $\Ch_d(R_1)$ or $\mathbb X_{n,\lambda}$, where $\lambda=(1,\ldots,1)$ is a $d$-tuple.

For an overview of Chow varieties, see \cite[Section 8.6]{Landsberg}.  Note that $\dim\Split_d(\PP^n)=dn$.

\begin{definition}
For a given variety $X\subset\PP^N$, the $s$th \define{secant variety} of $X$ is Zariski closure of the union of all secant $(s-1)$-planes to $X$, i.e.,
\begin{equation*}
\sigma_s(X)=\overline{\bigcup_{x_1,\ldots,x_s\in X}\langle x_1,\ldots,x_s\rangle}.
\end{equation*}

A basic dimension count shows that
\begin{equation*}
\dim\sigma_s(X)\leq\min\{s(\dim X+1)-1,N\}.
\end{equation*}

The right hand side of the above inequality is called the \define{expected dimension} of $\sigma_s(X)$, denoted $\expdim\sigma_s(X)$.  If $\dim\sigma_s(X)<\expdim\sigma_s(X)$, then $\sigma_s(X)$ is \define{defective}.  Otherwise, $\sigma_s(X)$ is \define{nondefective}.
\end{definition}

A famous lemma of Terracini\cite{Terracini} states that if $x_1,\ldots,x_s$ are generic points of a variety $X$ and $y$ is a generic point in $\langle x_1,\ldots,x_s\rangle$, then
\begin{equation*}
T_y\sigma_s(X)=\langle T_{x_1}X,\ldots,T_{x_s}X\rangle.
\end{equation*}

By the product rule from calculus, we see that
\begin{equation*}
\affinecone T_{[\ell_1\cdots\ell_d]}\Split_d(\PP^n)=\sum_{j=1}^d\ell_1\cdots\ell_{j-1}\ell_{j+1}\cdots\ell_d R_1.
\end{equation*}

Combining these results, we obtain the following useful lemma.

\begin{lemma}[Terracini's lemma for Chow varieties]\label{terracini}
If $\ell_{i,j}\in R_1$, $i\in\{1,\ldots,s\}$, $j\in\{1,\ldots,d\}$, are generic, then
\begin{equation*}
\dim\sigma_s(\Split_d(\PP^n))=\dim\sum_{i=1}^s\sum_{j=1}^d\ell_{i,j}\cdots\ell_{i,j-1}\ell_{i,j+1}\cdots\ell_{i,d} R_1-1.
\end{equation*}
\end{lemma}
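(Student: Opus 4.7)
The plan is to assemble three ingredients already recalled in the text: (i) the classical Terracini lemma, (ii) the product--rule description of the affine tangent cone to $\Split_d(\PP^n)$ at a smooth point, and (iii) the fact that the dimension of a projective variety equals the dimension of its tangent space at a generic point, minus one.

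First, I would note that the parameterization
\[
\Phi\colon (R_1)^d \to \affinecone{\Split_d(\PP^n)}, \qquad (\ell_1,\ldots,\ell_d)\mapsto \ell_1\cdots\ell_d,
\]
is a dominant (in fact surjective) morphism, so a generic choice of the $d$-tuple $(\ell_{i,1},\ldots,\ell_{i,d})$ yields a generic point $x_i:=[\ell_{i,1}\cdots\ell_{i,d}]$ of $\Split_d(\PP^n)$; in particular $x_i$ is a smooth point. Next, by Terracini's lemma, for generic $x_1,\ldots,x_s\in\Split_d(\PP^n)$ and generic $y\in\langle x_1,\ldots,x_s\rangle$,
\[
T_y\sigma_s(\Split_d(\PP^n))=\langle T_{x_1}\Split_d(\PP^n),\ldots,T_{x_s}\Split_d(\PP^n)\rangle.
\]

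Passing to affine cones, a linear span in $\PP^N$ becomes a sum of $\kk$-vector subspaces in $\AA^{N+1}$, so
\[
\affinecone{T_y\sigma_s(\Split_d(\PP^n))}
=\sum_{i=1}^s \affinecone{T_{x_i}\Split_d(\PP^n)}.
\]
Substituting the product--rule formula
\[
\affinecone{T_{[\ell_{i,1}\cdots\ell_{i,d}]}\Split_d(\PP^n)}
=\sum_{j=1}^d \ell_{i,1}\cdots\ell_{i,j-1}\ell_{i,j+1}\cdots\ell_{i,d}\,R_1
\]
recalled just above, the affine tangent space to $\sigma_s(\Split_d(\PP^n))$ at the generic point $y$ equals the double sum appearing in the statement.

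Finally, since $\sigma_s(\Split_d(\PP^n))$ is irreducible and $y$ is a smooth point for generic choice of the data, its dimension agrees with that of $T_y\sigma_s(\Split_d(\PP^n))$, and the projective dimension is one less than the dimension of the affine cone. This yields the claimed equality. The only subtle point is that the genericity condition in Terracini's lemma is stated for generic points of the variety, whereas the lemma as phrased uses generic linear forms $\ell_{i,j}$; that is precisely why one needs the dominance of $\Phi$ observed at the outset, and this is the only step that requires any care in the argument.
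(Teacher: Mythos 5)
Your argument is correct and is exactly the route the paper takes: the lemma is presented there as an immediate combination of the classical Terracini lemma with the product--rule description of $\affinecone{T_{[\ell_1\cdots\ell_d]}\Split_d(\PP^n)}$, which is precisely your assembly. Your extra remark on the dominance of the parameterization $(\ell_1,\ldots,\ell_d)\mapsto\ell_1\cdots\ell_d$, justifying that generic linear forms give a generic (smooth) point of the Chow variety, is a worthwhile refinement of a step the paper leaves implicit.
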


Note that $\Split_d(\PP^n)$ contains many singular points.  Indeed, if $d\leq n$ and $\{\ell_1,\ldots,\ell_d\}$ is a linearly dependent set in $R_1$, then $[\ell_1\cdots\ell_d]$ will be singular.  Therefore, it is important that we pick generic points for these calculations.  Fortunately, since $\kk$ is algebraically closed and therefore infinite, we may certainly find generic $\ell_i$.

We close this section with an elementary proof of Conjecture \ref{conjecture} for quadratics using Lemma \ref{terracini}.

\begin{theorem}\label{DEqualsTwo}
For all $n\in\NN$,
\begin{equation*}
\dim\sigma_s(\Split_2(\PP^n))=\min\left\{s(2n+1)-2s(s-1),\binom{n+2}{2}\right\}-1.
\end{equation*}

In particular, $\sigma_s(\Split_2(\PP^n))$ is defective if and only if $2\leq s\leq\frac{n}{2}$.
\end{theorem}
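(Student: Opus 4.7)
The plan is to apply Lemma~\ref{terracini} with $d=2$ and reduce to a dimension count for a product of linear subspaces. By Terracini, for generic $\ell_{i,j}\in R_1$,
$$\dim\sigma_s(\Split_2(\PP^n)) = \dim W - 1,\qquad W := \sum_{i=1}^s\bigl(\ell_{i,1}R_1 + \ell_{i,2}R_1\bigr).$$
The decisive feature of the $d=2$ case, and what I would exploit first, is that each Terracini summand has the form $\ell\cdot R_1$, so the whole sum collapses to a single product: setting $L := \Span(\ell_{i,j}) \subseteq R_1$, I would show that $W = L\cdot R_1$, where $L\cdot R_1 \subseteq R_2$ is the span of products $\ell m$ with $\ell\in L$, $m\in R_1$. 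One inclusion is immediate; the other follows from the rewriting $\bigl(\sum c_{i,j}\ell_{i,j}\bigr)h = \sum_{i,j}\ell_{i,j}(c_{i,j}h) \in W$.

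The core step is then computing $\dim L\cdot R_1$. For generic $\ell_{i,j}$, $\dim L = \min\{2s,n+1\}$. When $2s \leq n+1$, I would change coordinates so that $L = \Span(x_0,\ldots,x_{2s-1})$; then $L\cdot R_1$ is spanned by those degree-$2$ monomials $x_ix_j$ (with $i\leq j$) for which $i\leq 2s-1$, and its complement in the monomial basis of $R_2$ consists of the $\binom{n+2-2s}{2}$ monomials in the remaining variables $x_{2s},\ldots,x_n$. Hence
$$\dim W = \binom{n+2}{2} - \binom{n+2-2s}{2} = s(2n+1) - 2s(s-1),$$
the last equality being a routine binomial manipulation. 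When $2s > n+1$, $L = R_1$ generically, so $W = R_2$ and $\dim W = \binom{n+2}{2}$. Combining the two regimes recovers the formula in the theorem.

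For the defectivity assertion, I would compare with $\expdim\sigma_s(\Split_2(\PP^n)) = \min\{s(2n+1),\binom{n+2}{2}\}-1$. The naive defect $2s(s-1)$ is positive iff $s\geq 2$, and it is actually realized (rather than absorbed into the ambient-space cap) precisely when $2s\leq n$. Together these give defectivity exactly for $2\leq s\leq n/2$. The only mildly subtle steps here are the identity $W = L\cdot R_1$ and the binomial simplification; neither should present a real obstacle.
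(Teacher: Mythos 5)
Your proposal is correct and follows essentially the same route as the paper: apply Terracini to reduce to the space $\sum_{i,j}\ell_{i,j}R_1$, observe that it equals $L\cdot R_1$ for $L$ the span of the $2s$ generic linear forms (the paper does this implicitly by extending the $\ell_{i,j}$ to a basis of $R_1$), and compute its dimension by counting the $\binom{n+2-2s}{2}$ missing monomials. The coordinate change, the case split at $2s\approx n$, and the closing binomial identity all match the paper's argument.
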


\begin{proof}
Let $\ell_0,\cdots,\ell_{2s-1}\in R_1$ be generic and let $V=\sum_{i=1}^{2s-1}\ell_iR_1$, so that $\dim V=\dim\sigma_s(\Split_2(\PP^n))=\dim V-1$ by Lemma \ref{terracini}.

If $2s>n$, then $V=R_2$, and so $\dim V=\binom{n+2}{2}$.

Otherwise, choose linear forms $\ell_{2s},\ldots,\ell_n$ such that $\{\ell_0,\ldots,\ell_n\}$ is a basis for $R_1$.  Then $V$ has basis $\{\ell_i\ell_j:i<2s\text{ or }j<2s\}$.  Then
\begin{align*}
\dim V &= \binom{n+2}{2}-\binom{n-2s+2}{2}\\
&= s(2n+1)-2s(s-1).\qedhere
\end{align*}
\end{proof}
\section{Induction}

We adapt a technique of Abo, Ottaviani, and Peterson for studying secant varieties of Segre varieties \cite{AOP} to secant varieties of Chow varieties.

Recall from Lemma \ref{terracini} that in order to find the dimension of $\sigma_s(\Split_d(\PP^n))$, it suffices to calculate the dimension of a vector space.  Our approach is to specialize some of the $\ell_{i,j}$ and note that the space may then be decomposed into a direct sum of smaller spaces.  These smaller spaces are defined by polynomials with fewer variables and/or smaller degree.  If the smaller spaces have the expected dimension, then the larger space will too.  This allows us to use induction to obtain our results.

Pick nonnegative integers $s,t,u,v$, and choose generic $d$-tuples of linear forms $\vec f_i$ for $i\in\{1,\ldots,s\}$ and $\vec f'''_i$ for $i\in\{1,\ldots,v\}$ and generic $(d+1)$-tuples of linear forms $\vec f'_i$ for $i\in\{1,\ldots,t\}$ and $\vec f''_i$ for $i\in\{1,\ldots,u\}$.  For any integer $k>2$ and $J\subsetneq\{1,\ldots,k\}$, we define the map $\pi_J:R_1^{k}\rightarrow R_{k-|J|}$ by $(\ell_1,\ldots,\ell_k)\mapsto\prod_{j\not\in J}\ell_j$.  Note that we will denote $\pi_{\{j\}}$ by $\pi_j$.

We now define the following subspace of $R_d$.
\begin{align*}
A(n,d,s,t,u,v) =&\sum_{i=1}^s\sum_{j=1}^d\pi_j(\vec f_i)R_1+\sum_{i=1}^t\Span\{\pi_1(\vec f'_i)\}\\
&+\sum_{i=1}^u\sum_{j=1}^{d+1}\Span\{\pi_j(\vec f''_i)\}+\sum_{i=1}^v\pi_1(\vec f'''_i)R_1.
\end{align*}

We also define the function $a$ by
\begin{equation*}
a(n,d,s,t,u,v) = s(dn+1)+td +u(d+1)+v(n+1).
\end{equation*}

\begin{definition}
If $a(n,d,s,t,u,v)\leq\binom{n+d}{d}$, then the 6-tuple $(n,d,s,t,u,v)$ is \define{subabundant}.  If $\dim A(n,d,s,t,u,v)=a(n,d,s,t,u,v)$, then we say that the statement $\statement A(n,d,s,t,u,v)$ is \define{true}.  Otherwise, $\statement A(n,d,s,t,u,v)$ is \define{false}.
\end{definition}

Note that, by Lemma \ref{terracini}, if $(n,d,s,0,0,0)$ is subabundant and $\statement A(n,d,s,0,0,0)$ is true, then $\sigma_s(\Split_d(\PP^n)$ is nondefective.

\begin{theorem}\label{SplittingInductionTheorem}
Suppose $n\geq 2$, $d\geq 3$, $s=s'+s''$, $t=t'+t''$, $u=u'+u''$, and $v=v'+v''$.  Let $S=\{(n-1,d,s'',t''+u',u'',s'+v''),(n-1,d-1,s',t'+v'',s''+u',v'),(n-1,d-2,0,v',s',0)\}$.  If $\vec s$ is subabundant and $\statement A(\vec s)$ is true for all $\vec s\in S$, then $(n,d,s,t,u,v)$ is subabundant and  $\statement A(n,d,s,t,u,v)$ is true.  In particular, if $t=u=v=0$, then $\sigma_s(\Split_d(\PP^n))$ is nondefective.
\end{theorem}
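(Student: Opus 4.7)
The plan is to adapt the Horace method (as used by Abo, Ottaviani, and Peterson for Segre varieties \cite{AOP}) to the Chow setting, via a two-step restriction to a hyperplane. Fix coordinates so $R=\kk[x_0,x_1,\ldots,x_n]$ and let $R'=\kk[x_1,\ldots,x_n]$. The restriction $\rho\colon R_d\to R'_d$, $f\mapsto f|_{x_0=0}$, is surjective with kernel $x_0R_{d-1}$.

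I first specialize the generic tuples defining $A:=A(n,d,s,t,u,v)$ in accordance with the given splits. For each of $s,t,u,v$, the doubly-primed tuples are specialized so that every entry lies in $R'_1$, while the singly-primed tuples have one distinguished entry set to $x_0$ and the remaining entries left generic in $R_1$. Write $A'\subseteq R_d$ for the resulting specialized space; by upper semicontinuity of rank, $\dim A\geq\dim A'$.

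Next I apply $\rho$ to obtain
\begin{equation*}
0\to A'\cap x_0R_{d-1}\to A'\to \rho(A')\to 0,
\end{equation*}
and then repeat on the kernel quotient $W:=(A'\cap x_0R_{d-1})/x_0\subset R_{d-1}$ to obtain a second such sequence. A case analysis over the eight (flavor, type) combinations tracks how each specialized summand distributes among the three pieces $\rho(A')$, $\rho(W)$, and $(W\cap x_0R_{d-2})/x_0$. For instance, a doubly-primed $s$-tuple contributes a tangent space to $\rho(A')$ (an $s$-type of $A(S_1)$) and a $d$-dimensional span of partial products to $\rho(W)$ (a $u$-type of $A(S_2)$); a singly-primed $s$-tuple splits its contribution across all three elements of $S$: a partial-tangent product in $\rho(A')$ ($v$-type of $A(S_1)$), a tangent space in $\rho(W)$ ($s$-type of $A(S_2)$), and a span of products in $(W\cap x_0R_{d-2})/x_0$ ($u$-type of $A(S_3)$). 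Parallel analyses for $t$-, $u$-, and $v$-types yield exactly the multiplicities encoded in the parameters of $S$.

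Once the identifications are in place, the hypothesis that $\statement{A}(\vec s)$ is true for each $\vec s\in S$ gives $\dim\rho(A')=a(S_1)$, $\dim\rho(W)=a(S_2)$, and $\dim(W\cap x_0R_{d-2})/x_0=a(S_3)$, so that $\dim A'=a(S_1)+a(S_2)+a(S_3)$. A direct arithmetic check identifies this sum with $a(n,d,s,t,u,v)$. Combined with the subadditivity bound $\dim A\leq a(n,d,s,t,u,v)$ (which also establishes subabundance), we conclude $\dim A=a(n,d,s,t,u,v)$, i.e., $\statement{A}(n,d,s,t,u,v)$ is true. In the particular case $t=u=v=0$, Lemma~\ref{terracini} then gives nondefectivity of $\sigma_s(\Split_d(\PP^n))$.

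The main obstacle is the combinatorial bookkeeping: for each of the eight specialized cases one must verify that the contribution lands in the correct summand of $A(S_i)$ with the correct multiplicity, and that each piece achieves its full expected dimension. The latter is the genuinely delicate point, since the "free" linear forms in the specialized tuples must remain generic with respect to each smaller problem, ensuring each piece is itself a generic instance of $A(S_i)$ for which the hypothesis applies. The singly-primed $s$-tuples are the trickiest, as their contribution is genuinely distributed across all three elements of $S$, and the distinguished $x_0$-entry must be selected uniformly across such tuples to produce the clean Horace decomposition.
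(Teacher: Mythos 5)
Your proposal is correct and follows essentially the same route as the paper: specialize the tuples with respect to the hyperplane $x_0=0$, identify the resulting pieces with generic instances of $A(\vec s)$ for $\vec s\in S$, and close with the arithmetic identity $a(S_1)+a(S_2)+a(S_3)=a(n,d,s,t,u,v)$ plus semicontinuity; the only organizational difference is that the paper also pushes the non-distinguished entries of the singly-primed tuples into $\Span\{x_1,\ldots,x_n\}$, so the specialized space splits as a literal direct sum graded by the power of $x_0$, whereas you keep those entries generic in $R_1$ and run the restriction exact sequence twice, which amounts to the same bookkeeping. One small correction: subabundance of $(n,d,s,t,u,v)$ does not follow from the bound $\dim A\leq a(n,d,s,t,u,v)$ as your parenthetical suggests, but rather from summing the subabundance inequalities of the three elements of $S$ and applying Pascal's rule, since $\binom{n+d-1}{d}+\binom{n+d-2}{d-1}+\binom{n+d-3}{d-2}\leq\binom{n+d}{d}$.
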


\begin{proof}
In the following, we construct a vector space $V$ whose dimension is bounded above by the dimension of $A(n,d,s,t,u,v)$.  It is defined as the sum of four smaller spaces whose dimensions are known based on our assumptions, providing us with a lower bound.  It turns out that these bounds coincide, and thus we obtain our result.

Let $U=\Span\{x_1,\ldots,x_n\}$.
\begin{itemize}
\item Choose generic $\tuple f_i\in\Span\{x_0\}\times U^{d-1}$ if $i\in\{1,\ldots,s'\}$ and $\tuple f_i\in U^{d}$ if $i\in\{s'+1,\ldots,s\}$.
\item Let $V_1=\sum_{i=1}^s\sum_{i=1}^d\pi_i(\vec f_i)R_1$.
\item Choose generic $\tuple f'_i\in R_1\times\Span\{x_0\}\times U^{d-1}$ if $i\in\{1,\ldots,t'\}$ and $\tuple f'_i\in U^{d+1}$ if $i\in\{t'+1,\ldots,t\}$.
\item Let $V_2=\sum_{i=1}^t\Span\{\pi_1(\vec f'_i)\}$.
\item Choose generic $\tuple f''_i\in\Span\{x_0\}\times U^{d}$ if $i\in\{1,\ldots,u'\}$ and $\tuple f''_i\in U^{d}$ if $i\in\{u'+1,\ldots,u\}$.
\item Let $V_3=\allowbreak\sum_{i=1}^u\sum_{i=1}^{d+1}\Span\{\pi_i(\vec f''_i)\}$
\item Choose generic $\tuple f'''_i\in R_1\times\Span\{x_0\}\times U^{d-2}$ if $i\in\{1,\ldots,v'\}$ and $\tuple f'''_i\in U^{d}$ if $i\in\{v'+1,\ldots,v\}$.
\item Let $V_4=\sum_{i=1}^v\pi_1(\vec f'''_i)R_1$.
\end{itemize}

We define $V=\sum_{i=1}^4V_i$.  Note that, by construction, $\dim V\leq\dim A(n,d,s,t,u,v)$.

Recall that the $V_i$ were constructed carefully with some of the defining linear forms equal to $x_0$ and others which did not involve $x_0$.  We take advantage of this construction by manipulating each of the $V_i$ into a direct sum of smaller spaces.

For $V_1$, we have
\begin{align*}
V_1 &= \sum_{i=1}^{s'}\sum_{j=1}^d\pi_j(\tuple f_i)R_1+ \sum_{i=s'+1}^{s}\sum_{j=1}^d\pi_j(\tuple f_i)R_1\\
&= \sum_{i=1}^{s'}\left(\pi_1(\tuple f_i)R_1+x_0\sum_{j=2}^d\pi_{\{1,j\}}(\tuple f_i)R_1\right)\\
&\quad+\sum_{i=s'+1}^{s}\left(\sum_{j=1}^d\pi_j(\tuple f_i)U+x_0\sum_{j=1}^d\Span\{\pi_j(\tuple f_i)\}\right)\\
&=\left(\sum_{i=1}^{s'}\pi_1(\tuple f_i)U+\sum_{i=s'+1}^{s}\sum_{j=1}^d\pi_j(\tuple f_i)U\right)\\
&\quad\oplus x_0\left(\sum_{i=1}^{s'}\sum_{j=2}^d\pi_{\{1,j\}}(\tuple f_i)U+\sum_{i=s'+1}^{s}\sum_{j=1}^d\Span\{\pi_j(\tuple f_i)\}\right)\\
&\quad\oplus x_0^2\left(\sum_{i=1}^{s'}\sum_{j=2}^d\Span\{\pi_{\{1,j\}}(\tuple f_i)\}\right)\\
&\cong A(n-1,d,s'',0,0,s')\oplus A(n-1,d-1,s',0,s'',0)\\
&\quad\oplus A(n-1,d-2,0,0,s',0),\\
\end{align*}

Via similar calculations, we obtain

\begin{align*}
V_2 &\cong A(n-1,d,0,t'',0,0)\oplus A(n-1,d-1,0,t',0,0)\\
V_3 &\cong A(n-1,d,0,u',u'',0)\oplus A(n-1,d-1,0,0,u',0)\\
V_4 &\cong A(n-1,d,0,0,0,v'')\oplus A(n-1,d-1,0,v'',0,v')\\
&\quad\oplus A(n-1,d-2,0,v',0,0).
\end{align*}

Adding these results together, we get
\begin{align*}
V&\cong A(n-1,d,s'',t''+u',u'',s'+v'')\oplus A(n-1,d-1,s',t'+v'',s''+u',v')\\
&\quad\oplus A(n-1,d-2,0,v',s',0),
\end{align*}
and therefore, by assumption,
\begin{align*}
\dim V &= a(n-1,d,s'',t''+u',u'',s'+v'')\\
&\quad +a(n-1,d-1,s',t'+v'',s''+u',v')+a(n-1,d-2,0,v',s',0)\\
&=s''((n-1)d+1)+t''+u'+u''(d+1)+(s'+v'')n\\
&\quad+s'((n-1)(d-1)+1)+t'+v''+(s''+u')d+v'n\\
&\quad+v'+s'(d-1)\\
&=s(dn+1)+t+u(d+1)+v(n+1)\\
&=a(n,d,s,t,u,v)\geq\dim A(n,d,s,t,u,v).
\end{align*}

Consequently, $\dim V=\dim A(n,d,s,t,u,v)$, and the result follows.
\end{proof}

\section{Results}

Note that the dimensions of $\sigma_2(\Split_d(\PP^n))$ are completely known (see, for example, \cite{Shin2}).  Our goal in this section is to prove conjecture \ref{conjecture} for other small $s$.

We will need the following lemma.

\begin{lemma}\label{F true}
If $\statement A(n,d,s,0,0,0)$ is true and subabundant, then $\statement A(n,d-1,0,0,s,0)$ is true and subabundant.
\end{lemma}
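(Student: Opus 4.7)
The plan is to argue by contradiction: assume the $sd$ forms $\pi_j(\tuple f_i)\in R_{d-1}$ (for generic $d$-tuples $\tuple f_i$ with $i=1,\ldots,s$ and $j=1,\ldots,d$) satisfy a nontrivial linear relation, and extract a contradiction from $\statement A(n,d,s,0,0,0)$. I would first dispatch the subabundance bookkeeping: using the identity $\binom{n+d-1}{d-1}=\tfrac{d}{n+d}\binom{n+d}{d}$, the target inequality $sd\leq\binom{n+d-1}{d-1}$ is equivalent to $s(n+d)\leq\binom{n+d}{d}$, which follows from the hypothesis $s(dn+1)\leq\binom{n+d}{d}$ since $(d-1)(n-1)\geq 0$.

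For the independence claim, fix a basis $x_0,\ldots,x_n$ of $R_1$ and expand $f_{i,j}=\sum_k a_{i,j,k}x_k$ (all $a_{i,j,k}\neq 0$ by genericity). Write $g_{i,j,k}:=\pi_j(\tuple f_i)x_k$ for the natural spanning set of $A(n,d,s,0,0,0)$, of cardinality $sd(n+1)$. The hypothesis fixes the dimension of the space of linear relations among the $g_{i,j,k}$ at $sd(n+1)-s(dn+1)=s(d-1)$. The crucial step is to identify an explicit basis of this relation space via the Koszul-type identities
\[
\rho_{i,j}:\quad \pi_j(\tuple f_i)f_{i,j}-\pi_1(\tuple f_i)f_{i,1}=0,\qquad i=1,\ldots,s,\ j=2,\ldots,d,
\]
which are $s(d-1)$ in number. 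Viewed in the coefficient space $\kk^{sd(n+1)}$, each $\rho_{i,j}$ is a sparse vector with entries $a_{i,j,k}$ at positions $(i,j,k)$ and $-a_{i,1,k}$ at $(i,1,k)$; a direct coordinate check using only the nonvanishing of the $a_{i,j,k}$ shows the $\rho_{i,j}$ are linearly independent, hence form a basis.

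With this basis in hand, a putative dependence $\sum_{i,j}c_{i,j}\pi_j(\tuple f_i)=0$ in $R_{d-1}$, multiplied by each $x_k$, yields a family of relations $R_k:=\sum_{i,j}c_{i,j}g_{i,j,k}=0$, each supported on a single ``$k$-slice.'' Expanding $R_k=\sum_{i,j}b_{k,i,j}\rho_{i,j}$ and comparing the coefficient of $g_{i',j',k'}$ with $j'\geq 2$ produces $b_{k,i',j'}a_{i',j',k'}=c_{i',j'}\delta_{k,k'}$. Since $n\geq 1$ provides some $k'\neq k$ with $a_{i',j',k'}\neq 0$, we get $b_{k,i',j'}=0$; setting $k'=k$ then forces $c_{i',j'}=0$ for $j'\geq 2$. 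The analogous equations at positions $(i',1,\cdot)$ yield $c_{i',1}=0$, contradicting the assumption.

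The step I expect to be the main obstacle is identifying the $\rho_{i,j}$ as a basis of the relation space without circularity: their linear independence must be established intrinsically in the coefficient space $\kk^{sd(n+1)}$, never invoking the linear independence of the $\pi_j(\tuple f_i)$ (which is precisely the conclusion). Once that is in hand, the ``slice'' decomposition of each $R_k$ reduces the remainder of the argument to a mechanical coefficient chase.
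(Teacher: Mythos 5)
Your proof is correct, but it takes a genuinely different route from the paper's. The paper uses the hypothesis only to conclude that the sum $\sum_{i=1}^s V_i$, where $V_i=\sum_{j=1}^d\pi_j(\tuple f_i)R_1$, is direct with each $\dim V_i=dn+1$; a putative relation $\sum_{i,j}a_{i,j}\pi_j(\tuple f_i)=0$ is multiplied by a single linear form $g$, split across the direct sum into per-$i$ relations, and each of those is then killed by a unique-factorization argument: a nontrivial relation for a single $i$ would express $\pi_1(\tuple f_i)$ as a multiple of $(\tuple f_i)_1$, forcing that factor to recur in $\tuple f_i$ up to scalar and contradicting genericity. You instead determine the entire relation space among the spanning products $\pi_j(\tuple f_i)x_k$: rank--nullity pins its dimension at $s(d-1)$, the Koszul relations $\rho_{i,j}$ are visibly independent (given that all coordinates $a_{i,j,k}$ are nonzero, which is indeed generic) and hence form a basis, and a coefficient chase shows that no nonzero relation supported on a single $x_k$-slice lies in their span. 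Your intermediate result is finer than the paper's (an explicit basis of the syzygies rather than mere directness over $i$), and you avoid unique factorization at the price of a longer coordinate computation; both arguments consume the dimension hypothesis and the genericity in essentially the same places. Your subabundance argument via $(d-1)(n-1)\ge 0$ is also sound, though not strictly needed: once the $sd$ forms are shown independent in $R_{d-1}$, the bound $sd\le\binom{n+d-1}{d-1}$ is automatic, which is how the paper implicitly treats it.
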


\begin{proof}
For each $i\in\{1,\ldots,s\}$, choose generic $\tuple f_i\in R_1^d$, and let $V_i=\sum_{j=1}^d\pi_j(\tuple f_i)R_1$.

Note that, for each $i$, $\dim V_i\leq d(n+1)$.  But $\prod\tuple f_i\in\pi_j(\tuple f_i)R_1$ for each $m$, and so we can improve this to $\dim V_i\leq d(n+1)-(d+1)=dn+1$.

However, by assumption, we have $\dim\sum_{i=1}^sV_i=s(dn+1)$, and so we must have $\dim V_i=dn+1$.  Furthermore, $V_i\cap V_j=\{0\}$ for $j\neq i$.

Suppose there exist $a_{i,j}\in\kk$ such that $\sum_{i=1}^s\sum_{j=1}^da_{i,j}\pi_j(\tuple f_i)=0$.  Therefore, for nonzero $g\in R_1$, we have $\sum_{i=1}^s\sum_{j=1}^da_{i,j}\pi_j(\tuple f_i)g=0$.  However, $\sum_{j=1}^da_{i,j}\pi_j(\tuple f_i)g\in V_i$ for each $i$, and this implies that we must have $\sum_{j=1}^da_{i,j}\pi_j(\tuple f_i)g=0$.  Consequently, $\sum_{j=1}^da_{i,j}\pi_j(\tuple f_i)=0$, for each $i$.

Suppose $a_{i,j}\neq 0$ for some $i,j$.  Without loss of generality, suppose $j=1$.  Then we have
\begin{align*}
\pi_1(\tuple f_i)&=-\sum_{j=2}^d\frac{a_{i,j}}{a_{i,1}}\pi_j(\tuple f_i)\\
&=-(\tuple f_i)_1\sum_{j=2}^d\frac{a_{i,j}}{a_{i,1}}\pi_{\{1,j\}}(\tuple f_i).
\end{align*}
Therefore, $(\tuple f_i)_1$ occurs twice in $\tuple f_i$, up to scalar multiplication.  This is a contradiction since $\tuple f_i$ was chosen to be generic.  Therefore, $a_{i,j}=0$ for all $i,j$, and thus $\{\pi_j(\tuple f_i):1\leq i\leq s,\,1\leq j\leq d\}$ is a linearly independent set.  Since this set spans $A(n,d-1,0,0,s,0)$ and $a(n,d-1,0,0,s,0)=sd$, the corresponding statement is true.
\end{proof}

\begin{thmn}[\ref{induction on n}]
If, for some $n_0\in\NN$, $s(dn_0+1)\leq\binom{n_0+d}{d}$ and $\sigma_s(\Split_d(\PP^{n_0}))$ is nondefective, then $\sigma_s(\Split_d(\PP^n))$ is nondefective for all $n\geq n_0$.
\end{thmn}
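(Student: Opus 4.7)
The plan is to induct on $n$ starting at $n_0$, with Theorem~\ref{SplittingInductionTheorem} supplying the inductive step. The base case $n = n_0$ is exactly the hypothesis, which by Lemma~\ref{terracini} is equivalent to $\statement A(n_0, d, s, 0, 0, 0)$ being both true and subabundant. The cases $d \leq 2$ should be dispatched separately: $d = 1$ is immediate since $\Split_1(\PP^n) = \PP^n$, and for $d = 2$ the subabundance hypothesis combined with Theorem~\ref{DEqualsTwo} forces $s = 1$, for which the conclusion is automatic. So I may assume $d \geq 3$ from here on.

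For the inductive step (with $n > n_0$ and $d \geq 3$), the plan is to invoke Theorem~\ref{SplittingInductionTheorem} with the specialization $s' = 0$, $s'' = s$, $t = u = v = 0$. Under this choice the auxiliary set $S$ collapses to
\begin{equation*}
\{(n-1, d, s, 0, 0, 0),\ (n-1, d-1, 0, 0, s, 0),\ (n-1, d-2, 0, 0, 0, 0)\}.
\end{equation*}
The first tuple is precisely the inductive hypothesis at level $n-1$. The second is obtained from the first by a direct application of Lemma~\ref{F true}, which supplies both truth and subabundance. The third is trivial, as both $a$ and $\dim A$ are zero. Theorem~\ref{SplittingInductionTheorem} then yields that $\statement A(n, d, s, 0, 0, 0)$ is true and subabundant, and Lemma~\ref{terracini} converts this into nondefectiveness of $\sigma_s(\Split_d(\PP^n))$.

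The one piece of auxiliary bookkeeping is to verify that the subabundance bound itself propagates: assuming $s(d(n-1)+1) \leq \binom{n+d-1}{d}$, I must show $s(dn+1) \leq \binom{n+d}{d}$. Differencing both sides and using $\binom{n+d}{d} - \binom{n+d-1}{d} = \binom{n+d-1}{d-1}$, this reduces to $sd \leq \binom{n+d-1}{d-1}$, which, via the identity $\binom{n+d-1}{d} = (n/d)\binom{n+d-1}{d-1}$ together with the inductive subabundance, follows from $d(n-1) + 1 \geq n$, i.e., $(d-1)(n-1) \geq 0$ --- automatic for $d \geq 2$ and $n \geq 1$. I expect no genuine obstacle in this argument: the splitting machinery of Section~3 and Lemma~\ref{F true} carry all of the weight, and what remains is this routine binomial inequality.
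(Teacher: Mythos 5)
Your proposal is correct and is essentially identical to the paper's own proof: the paper likewise takes the hypothesis as the statement $\statement A(n_0,d,s,0,0,0)$ being true and subabundant, applies Lemma~\ref{F true} to obtain $\statement A(n_0,d-1,0,0,s,0)$, and then invokes Theorem~\ref{SplittingInductionTheorem} (implicitly with $s'=0$, $s''=s$, $t=u=v=0$) to step from $n$ to $n+1$. Your explicit treatment of $d\leq 2$ and the binomial bookkeeping are harmless additions (the latter is already guaranteed by the conclusion of Theorem~\ref{SplittingInductionTheorem}), but the argument is the same.
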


\begin{proof}
By assumption, $\statement A(n_0,d,s,0,0,0)$ is true and subabundant.  Therefore, by Lemma \ref{F true}, $\statement A(n_0,d-1,0,0,s,0)$ is true and subabundant.  It follows by Theorem \ref{SplittingInductionTheorem} that $\statement A(n_0+1,d,s,0,0,0)$ is true and subabundant.  The result follows by induction on $n$.
\end{proof}

Note that, by construction, $s_1(d)(3d+1)\leq\binom{d+3}{d}$ for all $d$.  This allows us to extend Theorem \ref{AboResult} to include forms with a larger number of variables.

\begin{corollary}
If $s\leq s_1(d)$, then $\sigma_s(\Split_d(\PP^n))$ is nondefective.
\end{corollary}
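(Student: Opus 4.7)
The plan is to combine Theorem \ref{induction on n} with Theorem \ref{AboResult}, using the ternary case $n_0 = 3$ as the base for induction on the number of variables. First I would verify the numerical inequality $s_1(d)(3d+1) \le \binom{d+3}{d}$ flagged in the sentence immediately preceding the corollary; for any $s \le s_1(d)$ this at once upgrades to $s(3d+1) \le \binom{d+3}{d}$, which is exactly the subabundance hypothesis that Theorem \ref{induction on n} demands at $n_0 = 3$. The second hypothesis of Theorem \ref{induction on n}, namely that $\sigma_s(\Split_d(\PP^3))$ is nondefective whenever $s \le s_1(d)$, is precisely the $n=3$ half of Theorem \ref{AboResult}. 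Applying Theorem \ref{induction on n} then yields nondefectivity of $\sigma_s(\Split_d(\PP^n))$ for every $n \ge 3$.

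For the residual cases $n \le 2$ one cannot appeal to Theorem \ref{induction on n}, but these are already known. When $n \le 1$ the conjecture is trivial: every form in at most two variables factors completely over $\kk$, so $\Split_d(\PP^n)$ already fills its ambient projective space and its secant varieties are automatically nondefective. When $n = 2$ (ternary forms), nondefectivity of every secant variety of $\Split_d(\PP^2)$ was established by Shin and Abo, as recorded in the introduction. Assembling these three ranges of $n$ gives the statement for all $n$.

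The only piece of genuine work is the numerical check $s_1(d)(3d+1) \le \binom{d+3}{d}$, carried out on each of the six residue classes of $d$ modulo $6$ appearing in the definition of $s_1$. This is where I expect the main obstacle to lie, not because any individual case is hard but because the inequality is not a soft asymptotic one: both sides grow as $\tfrac{1}{6}d^3$ to leading order, so the inequality is effectively a comparison of subleading coefficients and must be verified piecewise. The phrase ``by construction'' in the paper signals that $s_1$ was chosen precisely so that this comparison goes through, and indeed a routine case-by-case expansion confirms it, so the argument collapses into an immediate application of the two theorems already in hand.
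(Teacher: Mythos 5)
Your proposal is correct and follows exactly the paper's (implicit) argument: the remark preceding the corollary notes $s_1(d)(3d+1)\leq\binom{d+3}{d}$, which together with the $n=3$ case of Theorem \ref{AboResult} supplies both hypotheses of Theorem \ref{induction on n} at $n_0=3$, and the residual cases $n\leq 2$ are covered by the trivial and ternary results cited in the introduction.
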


Theorem \ref{induction on n} allows us to reduce determining the nondefectivity of $\sigma_s(\Split_d(\PP^n))$ for fixed $s$ to finitely many cases.  We outline these cases.

\begin{proposition}\label{SmallSAlgorithm}
Fix an $s\in\NN$.  If $\sigma_s(\Split_d(\PP^n))$ is nondefective for all of the following cases
\begin{enumerate}[(i)]
\item $d=3$ and $\min\{n:s<s_2(n)\}\leq n\leq\min\left\{n:s\leq\frac{1}{3n+1}\binom{n+3}{3}\right\}$,
\item $4\leq d\leq\max\{d:s\geq s_2(d)\}$ and $4\leq n\leq\min\left\{n:s\leq\frac{1}{dn+1}\binom{n+d}{d}\right\}$, and
\item $\min\{d:s<s_2(d)\}\leq d\leq\max\{d:s>s_1(d)\}$ and \\$3\leq n\leq\min\left\{n:s\leq\frac{1}{dn+1}\binom{n+d}{d}\right\}$,
\end{enumerate}
then $\sigma_s(\Split_d(\PP^n))$ is nondefective for all $n,d\in\NN$ unless $d=2$ and $n\geq 2s$.
\end{proposition}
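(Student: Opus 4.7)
The plan is to partition $(n,d,s)\in\NN^3$ into regions according to how $s$ compares with $s_1, s_2$ and the subabundance threshold, and to verify that Theorem \ref{AboResult}, the Corollary following Theorem \ref{induction on n}, and the three hypothesized cases (i)--(iii) together cover every $(n,d)$ outside the defective region $\{d=2,\ n\geq 2s\}$. I would begin with the boundary pieces that require none of (i)--(iii): the case $d=1$ is trivial since $\Split_1(\PP^n)=\PP^n$; $d=2$ is Theorem \ref{DEqualsTwo}; $n=1$ is trivial because binary forms factor, so $\Split_d(\PP^1)=\PP^d$; and $n=2$ is the ternary result of Shin and Abo cited in the introduction. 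This leaves $n\geq 3$ and $d\geq 3$.

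For the slice $d=3$, write $n_{-}=\min\{n: s<s_2(n)\}$ and $n_{+}=\min\{n: s\leq \tfrac{1}{3n+1}\binom{n+3}{3}\}$. For $n<n_{-}$ one has $s\geq s_2(n)$ and Theorem \ref{AboResult} applies. For $n_{-}\leq n\leq n_{+}$, case (i) is assumed. At $n=n_{+}$ the triple is subabundant and nondefective, so Theorem \ref{induction on n} carries nondefectivity to every $n\geq n_{+}$, settling $d=3$.

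For $d\geq 4$, I would split on the position of $s$ relative to $s_1(d)$ and $s_2(d)$. If $s\leq s_1(d)$, the Corollary handles all $n$ in one stroke. If $s\geq s_2(d)$, Theorem \ref{AboResult} gives $n=3$, case (ii) supplies $4\leq n\leq n_{+}(d):=\min\{n: s\leq \tfrac{1}{dn+1}\binom{n+d}{d}\}$, and Theorem \ref{induction on n} extends from $n_{+}(d)$ upward. If $s$ lies in the gap $s_1(d)<s<s_2(d)$, equivalently $\min\{d: s<s_2(d)\}\leq d\leq \max\{d: s>s_1(d)\}$, then Abo is silent at $n=3$ but case (iii) covers $3\leq n\leq n_{+}(d)$, after which Theorem \ref{induction on n} again takes over.

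The principal obstacle is not conceptual but bookkeeping: one must confirm that the windows (i)--(iii) together with Abo's theorem and its corollary really do tile all $(n,d)$ not already handled, and that the upper endpoint $n_{+}(d)$ is always reached so that subabundance is in hand when Theorem \ref{induction on n} is invoked. Both points rest on the monotonicity of $n\mapsto\binom{n+d}{d}/(dn+1)$ in $n$ and on the inequality $s_1(d)(3d+1)\leq\binom{d+3}{d}$ noted before the Corollary.
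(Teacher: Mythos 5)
Your proof is correct and follows essentially the same route as the paper's: cover every pair $(n,d)$ with $n$ at most the subabundance threshold using Theorem \ref{AboResult}, its corollary, and the assumed cases (i)--(iii), then propagate to all larger $n$ via Theorem \ref{induction on n}. The paper's own proof is a two-sentence compression of exactly this argument; your case analysis merely makes the tiling explicit.
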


\begin{proof}
By the previously known results (in particular, Theorem \ref{AboResult}) or by assumption, we know that, for each $d\geq 3$, $\sigma_s(\Split_d(\PP^n))$ is nondefective if $n\leq\min\left\{n:s\leq\frac{1}{dn+1}\binom{n+d}{d}\right\}$.  It follows for greater $n$ by Theorem \ref{induction on n}.
\end{proof}

Using Proposition \ref{SmallSAlgorithm}, we can check the veracity of Conjecture \ref{conjecture} for any fixed $s$.  Recall that using Lemma \ref{terracini}, we may check the dimension of $\sigma_s(\Split_d(\PP^n))$ by calculating the dimension of a vector space, or equivalently, the rank of a matrix.  This is the usual technique used in the literature.  The author implemented this check in Macaulay2 \cite{M2} for as many values of $s$ as possible using the available computing power.  The code may be found in the ancillary files to the arXiv version of this paper.\\

\begin{thmn}[\ref{SUpperBound}]
If $s\leq 35$, then $\sigma_s(\Split_d(\PP^n))$ is nondefective for all $n,d\in\NN$ unless $d=2$ and $2\leq s\leq\frac{n}{2}$.
\end{thmn}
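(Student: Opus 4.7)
The plan is to apply Proposition \ref{SmallSAlgorithm} to reduce the problem, for each fixed $s \in \{2,\ldots,35\}$, to verifying nondefectivity of $\sigma_s(\Split_d(\PP^n))$ on a finite list of explicit pairs $(n,d)$, and then to certify each such case by a direct rank computation in Macaulay2 based on Lemma \ref{terracini}.

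First I would iterate over $s$ from $2$ to $35$. For each such $s$, Theorem \ref{AboResult} together with the corollary extending it (via Theorem \ref{induction on n}) handles the ternary case for $d \geq s_2^{-1}(s)$ and the $d=3$ case for $n$ large, and also handles $s \leq s_1(d)$. Thus the only remaining pairs are those enumerated in the three ranges (i), (ii), (iii) of Proposition \ref{SmallSAlgorithm}: a finite rectangle in the $(n,d)$-plane in each case, bounded above by the subabundance threshold $s \leq \tfrac{1}{dn+1}\binom{n+d}{d}$ and below by the previously settled regions. An explicit enumeration (e.g.\ in Macaulay2) produces the finite list of $(n,d)$ to be checked for each $s$.

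Second, for each remaining pair $(n,d)$, I would verify $\statement A(n,d,s,0,0,0)$ directly. By Lemma \ref{terracini}, it suffices to choose $s$ generic $d$-tuples of linear forms $\vec f_i \in R_1^d$, assemble the coefficient matrix $M$ of the spanning set $\{\pi_j(\vec f_i)\, x_k : 1 \leq i \leq s,\ 1 \leq j \leq d,\ 0 \leq k \leq n\}$ against the monomial basis of $R_d$, and check that $\operatorname{rank} M = s(dn+1)$. To certify this over $\kk$ while working over a finite field for efficiency, I would pick linear forms with random coefficients in a large prime field $\mathbb{F}_p$; a rank of $s(dn+1)$ there lifts to a lower bound $\dim A(n,d,s,0,0,0) \geq s(dn+1)$ in characteristic zero by semicontinuity, and the opposite inequality is automatic from subabundance. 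Combined with Proposition \ref{SmallSAlgorithm}, this yields nondefectivity for all $(n,d)$ with $d \neq 2$.

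The main obstacle is computational: the matrix $M$ has $s(dn+1)$ rows and $\binom{n+d}{d}$ columns, and in the worst unsettled cases within $s \leq 35$ this is sizable. I would mitigate this in three ways: reduce modulo a small prime, exploit the sparsity of the monomial expansions of $\pi_j(\vec f_i)\, x_k$, and, where possible, apply the inductive machinery of Theorem \ref{SplittingInductionTheorem} recursively rather than checking $(n,d)$ directly. For the residual cases that exceed $s=2$ and pass through the quadratic regime, the excluded range $d=2,\ 2 \leq s \leq n/2$ appears exactly as described by Theorem \ref{DEqualsTwo}, so the statement reads as asserted. Once every $(n,d)$ in the finite list has been verified, Proposition \ref{SmallSAlgorithm} delivers the theorem for every $s \leq 35$.
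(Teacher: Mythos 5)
Your proposal follows essentially the same route as the paper: reduce each fixed $s\leq 35$ to the finite list of pairs $(n,d)$ given by Proposition \ref{SmallSAlgorithm}, then certify each remaining case by computing the rank of the Terracini matrix from Lemma \ref{terracini} in Macaulay2. The additional implementation details you supply (random coefficients over a finite field plus semicontinuity to lift the rank bound to characteristic zero) are standard and consistent with what the paper does implicitly.
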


Note that the $s=35$ calculation took 90738.4 seconds (slightly longer than 1 day).  After this, the machine, which had 32 gigabytes of RAM, ran out of memory and the computations were aborted.

Note that while an increase in computer memory could provide a marginal increase in this upper bound, we should search elsewhere for a substantial improvement to this result.  Note that we decomposed the vector space from Lemma \ref{terracini} into a direct sum of only three smaller vector spaces.  In particular, we were not able to use this technique to consider the \textit{superabundant} case, where we expect the secant variety to fill the ambient space.  Possible future improvements may occur by using a finer decomposition.

\bibliography{ref}{}
\bibliographystyle{habbrv}
\end{document}